\let\uml\"
\title{Iterations of the functor of naive $\mathbb A^1$-connected components of varieties}
\author{Nidhi Gupta}	
\address{Department of Mathematical Sciences, Indian Institute of Science Education and Research Mohali, Knowledge City, Sector-81, Mohali 140306, India.}
\email{mp18009@iisermohali.ac.in}
\thanks{The author is supported by the Prime Minister's Research Fellowship from the Ministry of Human Resource Development, Government of India.}
\keywords{$\mathbb A^1$-homotopy theory,  $\mathbb A^1$-chain connected components, $\mathbb A^1$-connected components}
\subjclass[2020]{Primary 14F42}
\newtheorem{theorem}{Theorem}[section]
\newtheorem{lemma}[theorem]{Lemma}
\newtheorem{proposition}[theorem]{Proposition}
\theoremstyle{definition}
\newtheorem{definition}[theorem]{Definition}
\newtheorem{notation}[theorem]{Notation}
\newtheorem*{ack}{Acknowledgment}
\newtheorem{remark}[theorem]{Remark}
\newtheorem{construction}[theorem]{Construction}
\newtheorem*{case}{Case}
\begin{document}

\begin{abstract}
For any sheaf of sets $\mathcal F$ on $Sm/k$, it is well known that the universal $\mathbb A^1$-invariant quotient of $\mathcal F$ is given as the colimit of sheaves $\mathcal S^n(\mathcal F)$ where $\mathcal S(F)$ is the sheaf of naive $\mathbb A^1$-connected components of $\mathcal F$. We show that these infinite iterations of naive $\mathbb A^1$-connected components in the construction of universal $\mathbb A^1$-invariant quotient for a scheme are certainly required. For every $n$, we construct an $\mathbb A^1$-connected variety $X_n$ such that $\mathcal S^n(X_n)\neq \mathcal S^{n+1}(X_n)$ and $\mathcal S^{n+2}(X_n)=*$.
\end{abstract}

\maketitle

\setcounter{tocdepth}{1}
\tableofcontents

\section{Introduction}

Let $k$ be a field and $X$ be any smooth, finite-type scheme over 
$k$. In the unstable $\mathbb A^1$-homotopy category $\mathcal H(k)$ \cite{MV}, there are two notions of $\mathbb A^1$-connectedness for $X$. The genuine notion is the sheaf of $\mathbb A^1$-connected components $\mathbb \pi_0^{\mathbb A^1}(X)$, which is given by the Nisnevich sheafication of the presheaf that associates to any smooth scheme $U$ the set of morphisms from $U$ to $X$ in $\mathcal H(k)$. The naive notion is given by the sheaf of $\mathbb A^1$-chain connected components $\mathcal S(X)$(see Definition \ref{naive definiton}). Both of these notions may not coincide even for smooth and proper schemes \cite{BHSconnected}. However, if we take infinite iterations of $\mathcal S$ and subsequently form the direct limit, the resulting sheaf $\mathcal L(X)$(also known as the universal $\mathbb A^1$-invariant quotient) will coincide with $\mathbb \pi_0^{\mathbb A^1}(X)$, provided that the latter is $\mathbb A^1$-invariant \cite[Theorem 1]{BHSconnected}. 

The $\mathbb A^1$-invariance of the sheaf of $\mathbb A^1$-connected components for a general space $\mathcal X$ in $\mathcal H(k)$ has recently been disproved \cite{Aconjecture}. Nevertheless, there are various examples of schemes where the equality of $\mathbb \pi_0^{\mathbb A^1}$ and $\mathcal L$ has been established. It is known to coincide for $\mathbb A^1$-rigid schemes, proper curves \cite{BHSconnected}, smooth projective surfaces over an algebraically closed field \cite{BSruled}, smooth projective retract rational varieties over an infinite field \cite{balwe2023naive}, etc. Moreover, $\mathcal L(X)$ provides a complete geometric description of $\pi_0^{\mathbb A^1}(X)$ for sections over finitely generated, separable field extensions of $k$ \cite[Theorem 1.1]{BRSiterations}. 

In all the above examples, $\mathcal L$ has been shown to stabilise at some finite stage. In other words, $\mathcal L$ is shown to be equal to $\mathcal S^n$ for some $n$ in all these cases.  This leads to a natural question: are these iterations really necessary? More specifically, does there exist an $n$ such that $\mathcal S^n(X)=\mathcal L(X)$ for any scheme $X$? For a general space $\mathcal X$, it has already been answered in the negative by Balwe-Rani-Sawant \cite[Theorem 1.2]{BRSiterations}. For each $n$, they have constructed a sheaf of sets for which the iterations of naive $\mathbb A^1$-connected components do not stabilise before the $n$th stage. Moreover, they have remarked on the possibility of suitably modifying their construction to produce schemes $X_n$ with the same property \cite[Remark 4.7]{BRSiterations}.

The purpose of this note is to show that the infinite iterations of naive $\mathbb A^1$-connected components in the construction of $\mathcal L$ are certainly required in the case of varieties as well and that the suggested example in op. cit. indeed works. We prove the following:

\begin{theorem}\label{main}
For each $ n\in \mathbb N$, there exists a variety $X_n$ over $\mathbb C$ of dimension $n+1$ such that $\mathcal S^n(X_n)\neq \mathcal S^{n+1}(X_n)$. 
\end{theorem}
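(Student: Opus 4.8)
The plan is to carry out the scheme-level modification of the construction of \cite{BRSiterations} that is indicated there, and to prove it works. Write $\mathcal{F}_n$ for the sheaf constructed in op.\ cit., so that $\mathcal{S}^n(\mathcal{F}_n)\neq\mathcal{S}^{n+1}(\mathcal{F}_n)$; recall that $\mathcal{F}_n$ is assembled by an inductive procedure out of affine lines and certain auxiliary $\mathbb{A}^1$-rigid varieties, the only step leaving the category of schemes being a Nisnevich-local quotient introduced precisely to force a ghost homotopy one level deeper. The idea is to construct, by a parallel induction, a variety $X_n$ together with a morphism of sheaves $X_n\to\mathcal{F}_n$ which becomes an isomorphism after a single application of $\mathcal{S}$. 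Since $\mathcal{S}^j$ of a sheaf is a functor of its $\mathcal{S}$, this forces $\mathcal{S}^j(X_n)\cong\mathcal{S}^j(\mathcal{F}_n)$ for every $j\geq 1$, whence
\[
\mathcal{S}^n(X_n)\;\cong\;\mathcal{S}^n(\mathcal{F}_n)\;\neq\;\mathcal{S}^{n+1}(\mathcal{F}_n)\;\cong\;\mathcal{S}^{n+1}(X_n),
\]
which is Theorem \ref{main}. The case $n=1$ is handled directly using the smooth proper example of \cite{BHSconnected}, for which $\mathcal{S}$ already differs from $\pi_0^{\mathbb{A}^1}=\mathcal{L}$ and hence $\mathcal{S}\neq\mathcal{S}^2$; one then runs the induction from there.

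To produce $X_n$ one \emph{de-sheafifies} the top layer of $\mathcal{F}_n$: the Nisnevich quotient through which $\mathcal{F}_n$ is built from $\mathcal{F}_{n-1}$ is replaced by attaching, to the variety $X_{n-1}$ from the previous step, an explicit variety that realizes the same identification as an honest chain of $\mathbb{A}^1$-homotopies, while all of the auxiliary $\mathbb{A}^1$-rigid varieties of \cite{BRSiterations} (for instance products of copies of $\mathbb{G}_m$, or smooth proper curves of positive genus) are kept as honest closed subvarieties. A convenient building block for the replacement is, once more, a small variety $G$ with $\mathcal{S}(G)\neq\mathcal{S}^2(G)$, glued in along the pair of subvarieties of $G$ that witnesses this inequality. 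Because each attachment is performed along a closed subvariety of a quasi-projective variety, the resulting pushout $X_n$ is a reduced, separated, finite-type $k$-scheme; carrying out the gluings as self-identifications inside a single ambient variety, one moreover arranges $X_n$ to be irreducible, so that it is a variety. (These $X_n$ are singular along the gluing loci; producing smooth examples would require replacing the node-creating attachments by suitable fibre bundles, which I would not attempt here since the statement does not demand it.)

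The substance of the argument --- and the step I expect to be the main obstacle --- is to show that the attaching construction imposes \emph{exactly} the identifications dictated by $\mathcal{F}_n$ and nothing more; equivalently, that $X_n\to\mathcal{F}_n$ is an isomorphism after applying $\mathcal{S}$. This amounts to classifying all morphisms $\mathbb{A}^1_U\to X_n$ for $U$ smooth over $k$. Since $X_n$ is singular along the gluing loci, one cannot appeal to the standard machinery for naive homotopies of smooth schemes; instead one pulls such a morphism back along the normalization $\widetilde{X_n}\to X_n$, a disjoint union of smooth quasi-projective pieces, and controls combinatorially how the relative affine line can cross a non-normal point, using the $\mathbb{A}^1$-rigidity of the auxiliary subvarieties to force the morphism to be constant in the rigid directions. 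One concludes that every $\mathbb{A}^1$-chain in $X_n$ is a concatenation of the attached copies of $G$ and of the chains already present in $X_{n-1}$, so that $\mathcal{S}(X_n)$ is precisely the quotient of $X_n$ by the corresponding relation; matching this quotient, after Nisnevich sheafification, with $\mathcal{F}_n$, and checking that no \emph{additional} ghost homotopies are created in $\mathcal{S}(X_n)$ --- for which one leans on the analogous statement for the sheaves $\mathcal{F}_\bullet$ established in \cite{BRSiterations} --- completes the identification. A secondary, bookkeeping-level difficulty is to arrange the induction so that each de-sheafification step raises the stabilization stage by exactly one; this is safest to verify by exhibiting, at each level $n$, an explicit pair of sections over a suitable finitely generated separable extension of $k$ that become $\mathcal{S}$-equivalent only after $n+1$, and not after $n$, iterations. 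With these in place, Theorem \ref{main} follows from the displayed chain of (in)equalities.
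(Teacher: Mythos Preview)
Your strategy is genuinely different from the paper's, and it has a real gap at exactly the point you flag. The paper never compares with the sheaves of \cite{BRSiterations}; it builds the $X_n$ directly as iterated fibre products $X_n:=X_{n-1}\times_{X_{n-2}}X_{n-1}$, starting from the \emph{singular, quasi-affine} surface $X_1$ of \cite[Construction~4.3]{BHSconnected} (not a smooth proper example). This yields an explicit locally closed subscheme of $\mathbb{A}^{2n+1}_{\mathbb{C}}$ and, crucially, a projection $\psi_n\colon X_n\to X_{n-1}$. The inequality $[\alpha_n]_n\neq[\beta_n]_n$ is then obtained from a single geometric fact: every morphism $\mathbb{A}^1_{\mathbb{C}}\to X_n$ becomes constant after composing with $\psi_n$, because the fibres of the companion projection $\phi_n$ are $\mathbb{A}^1$-rigid; hence $\psi_n$ factors through $\mathcal{S}(X_n)$ and one inducts on $n$. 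The equality $[\alpha_n]_{n+1}=[\beta_n]_{n+1}$ is produced by writing down, for each $n$, a map $X_n\to\mathcal{S}(X_{n+1})$ obtained by gluing two explicit morphisms over an elementary Nisnevich cover of $X_n$ (pulled back from one of $\mathbb{A}^1_{\mathbb{C}}$), and composing these along the chain $\mathbb{A}^1\to\mathcal{S}(X_1)\to\mathcal{S}^2(X_2)\to\cdots$. At no point is $\mathcal{S}(X_n)$ computed as a sheaf; only the images of two $\mathbb{C}$-points are tracked.

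Your route, by contrast, hinges on proving that $X_n\to\mathcal{F}_n$ induces an isomorphism on $\mathcal{S}$. That statement is strictly stronger than what is needed, and establishing it requires a complete description of $\mathcal{S}(X_n)$ --- which is not easier than the direct argument. The sketch you give (lift $\mathbb{A}^1_U\to X_n$ through the normalisation and control crossings combinatorially) does not handle the Nisnevich sheafification: the very mechanism that creates the ghost homotopies you want can also create \emph{extra} identifications after sheafifying, and ``leaning on the analogous statement for $\mathcal{F}_\bullet$'' does not rule these out, since the question is about $X_n$, not $\mathcal{F}_n$. The construction of $X_n$ is also too loose to verify anything: ``self-identifications inside a single ambient variety'' does not pin down a scheme, and pushouts of schemes along closed subschemes need not commute with $\mathcal{S}$ in the way you presume. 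The paper's fibre-product construction sidesteps all of this by supplying the morphism $\psi_n$ that separates the two points for free; your gluing construction has no analogue of that map, which is why you are forced into the much harder comparison with $\mathcal{F}_n$.
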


The first example of a variety for which $\mathcal S(X)\neq \mathcal S^2(X)$ is of a singular surface $S_1$ over $\mathbb C$ \cite[Construction 4.3]{BHSconnected}. Taking $X_1=S_1$, we have inductively constructed a sequence of varieties $X_n$ having two points, $\alpha_n$ and $\beta_n$, in $X_n(\mathbb C)$ such that $\alpha_n$ and $\beta_n$ have the same images in $\mathcal S^{n+1}(X_n)(\mathbb C)$ but distinct images in $\mathcal S^n(X_n)(\mathbb C)$. We also show that these varieties $X_n$ are $\mathbb A^1$-connected and that $\pi_0^{\mathbb A^1}(X_n)=\mathcal S^{n+2}(X_n)=*$.

\begin{ack}
The author wishes to express her gratitude to her PhD supervisor, Dr. Chetan Balwe, for suggesting this problem, his constant guidance, and many helpful suggestions during the preparation of this note. She also thanks Dr. Anand Sawant for several insightful comments, which led to Theorem \ref{A1-connectedness}. Additionally, the author is grateful to the anonymous referee for their careful reading and various suggestions that improved the exposition of the paper. 
\end{ack}

\section{Preliminaries}

In this section, we recall relevant material from \cite{BHSconnected,MV} to make our exposition self-contained. We fix a base field $k$. Let $Sm/k$ denote the Grothendieck site of smooth schemes of finite type over $k$ equipped with the Nisnevich topology. 

\begin{notation}
For any smooth scheme $U$ over $k$ and $t\in k$, $s_t^U$ denotes the morphism $U\rightarrow \mathbb A^1_k\times U$ given by $u\mapsto (t,u)$. For any $H\in \mathcal{F}( \mathbb A^1_k\times U)$, define $H(t):= H\circ s_t^U$.
\end{notation}

\begin{definition}
Let $\mathcal F$ be a sheaf of sets in Nisnevich topology. For any smooth scheme $U$ in $Sm/k$ and $x_0$, $x_1$ in $\mathcal F(U)$, we say $x_0$ and $x_1$ are \textit{$\mathbb A^1$-homotopic} if there exists $h\in \mathcal F(\mathbb A^1_k\times U)$ such that $h(0)=x_0$ and $h(1)=x_1$. Moreover, $h$ is called an \textit{$\mathbb A^1$-homotopy} connecting $x_0$ and $x_1$.
\end{definition}

\begin{definition}\label{naive definiton}
The sheaf of \textit{naive $\mathbb A^1$-connected components} of $\mathcal F$, denoted by $\mathcal S(\mathcal F)$ is defined as the Nisnevich sheafication of the presheaf $\mathcal S^{pre}(\mathcal F)$, 
\[
\mathcal S^{pre}(\mathcal F)(U):= \frac{\mathcal F(U)}{\sim},
\]
where $\sim$ is the equivalence relation generated by $\mathbb A^1$-homotopy. Equivalently, $\mathcal S(\mathcal F)$ is the Nisnevich sheafication of the presheaf 
\[
U\mapsto \mathbb \pi_0\text{Sing}_*^{\mathbb A^1}(\mathcal F)(U),
\]
where $\text{Sing}^*_{\mathbb A^1}(\mathcal F)$ denotes the \textit{Morel-Voevodsky singular construction} on $\mathcal F$\cite[p.87]{MV}.  
\end{definition}

For any sheaf of sets $\mathcal F$, it is immediate from the definition of $\mathcal S$, that $\mathcal S(F)$ satisfies the following universal property.

\begin{lemma}\label{universal property of S}
Let $\mathcal F$, $\mathcal G\in Shv(Sm/k)_{Nis}$ be sheaves of sets. Suppose $\psi:\mathcal F\rightarrow G$ is a morphism such that for any $\mathbb A^1$-homotopy $h\in \mathcal F(\mathbb A^1_k\times U)$, and for any $s,t\in k$, the morphisms $(\psi\circ h)(s)$ and $(\psi\circ h)(t)$ are identical. Then $\psi$ factors through the canonical morphism $\mathcal F\rightarrow \mathcal S(\mathcal F)$.
\end{lemma}

\begin{proof}
View the morphism $\psi: \mathcal F\rightarrow G$ as a morphism of presheaves. By the definition of $\mathcal S^{pre}(\mathcal F)$, for any smooth scheme $U$, $\psi(U)$ factors through the morphism $\mathcal F(U)\rightarrow \mathcal S^{pre}(\mathcal F)(U)$:
\[
           \begin{tikzcd}
			\mathcal F(U) \arrow[r,rightarrow, "\psi(U)"] \arrow[d,rightarrow, ] & \mathcal G(U)  \\
			\mathcal S^{pre}(\mathcal F)(U) \arrow[ur,rightarrow, ]
		\end{tikzcd}
\]
Since $\mathcal F$ and $\mathcal G$ are sheaves of sets, after Nisnevich sheafication, the lemma follows.
\end{proof}

\begin{definition}
A sheaf $\mathcal F\in Shv(Sm/k)_{Nis}$ is called \textit{$\mathbb A^1$-invariant} if the maps $\mathcal F(U)\rightarrow \mathcal F(\mathbb A^1_k\times U)$, induced by the projections $\mathbb A^1_k\times U \rightarrow U$, are bijections. We say a scheme $X$ is \textit{$\mathbb A^1$-rigid} if, when viewed as a sheaf of sets, $X$ is $\mathbb A^1$-invariant.
\end{definition}

Iterating the construction of $\mathcal S$ infinitely many times yields a sequence of epimorphisms
\[
\mathcal F\rightarrow \mathcal S(F)\rightarrow \mathcal S^2(\mathcal F)\dots.
\]
After taking the direct limit, we arrive at the \textit{universal $\mathbb A^1$-invariant quotient} $\mathcal L(\mathcal F)$,
\[
\mathcal L(F):=\lim_{\rightarrow n} \mathcal S^n(\mathcal F).
\]

\begin{definition}
For any scheme $X$ over $k$, an \textit{elementary Nisnevich covering} comprises of the following two maps:
    \begin{enumerate}
        \item An open immersion $j: U\rightarrow X$.
        \item An etale map $p: V\rightarrow X$ where its restriction to $p^{-1}(X\setminus j(U))$ is an isomorphism onto $X\setminus j(U)$.
    \end{enumerate}
    
The resulting cartesian square 
\[
    \begin{tikzcd}
     U\times_X V  \arrow[d,rightarrow]\arrow[r,rightarrow] & 
     V \arrow[d, rightarrow, "p"]\\
     U  \arrow[r,rightarrow,"j" ] & 
     X
\end{tikzcd}
\]
is called an \textit{elementary distinguished square}.
\end{definition}

One of the significant aspects of using an elementary Nisnevich covering is illustrated by the following result in \cite[\S 3, Lemma 1.6 ]{MV}.

\begin{lemma}\label{pushout}
An elementary distinguished square is a cocartesian square in the category $Shv(Sm/k)_{Nis}$.
\end{lemma}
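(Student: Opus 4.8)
The plan is to unwind the universal property of a pushout and reduce the statement to the sheaf axiom for a two–element Nisnevich covering. By the (subcanonical) Yoneda embedding $Sm/k\hookrightarrow Shv(Sm/k)_{Nis}$ together with the universal property of a pushout, the given square is cocartesian in $Shv(Sm/k)_{Nis}$ if and only if, for every Nisnevich sheaf of sets $\mathcal F$, the canonical map
\[\mathcal F(X)\longrightarrow \mathcal F(U)\times_{\mathcal F(U\times_X V)}\mathcal F(V)\]
is a bijection. The first observation is that $\{j\colon U\to X,\ p\colon V\to X\}$ is a Nisnevich covering of $X$: every point of $j(U)$ is hit by $j$ with trivial residue field extension, and every point of $X\setminus j(U)$ is hit by $p$ with trivial residue field extension since $p$ restricts to an isomorphism $p^{-1}(X\setminus j(U))\xrightarrow{\ \sim\ }X\setminus j(U)$. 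Hence, fixing $\mathcal F$, the sheaf axiom for this covering presents $\mathcal F(X)$ as the set of descent data, namely the pairs $(s_U,s_V)\in\mathcal F(U)\times\mathcal F(V)$ whose two pullbacks agree on each of $U\times_X U$, $U\times_X V$, $V\times_X U$ and $V\times_X V$.

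Granting this, injectivity of the displayed map is immediate from the separation property of the sheaf $\mathcal F$. For surjectivity, I would start from a pair $(s_U,s_V)$ whose restrictions to $U\times_X V$ agree — this is exactly the condition cutting out the fibre product on the right — and verify the full cocycle condition. Over $U\times_X U=U$ (using that the open immersion $j$ is a monomorphism) the condition is vacuous, and over $U\times_X V$ and $V\times_X U$ it is precisely the compatibility we assumed. The one nontrivial point is to prove $\mathrm{pr}_1^{*}s_V=\mathrm{pr}_2^{*}s_V$ in $\mathcal F(V\times_X V)$, where $\mathrm{pr}_1,\mathrm{pr}_2\colon V\times_X V\to V$ are the two projections.

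This is where the geometry of the covering enters, and it is the main obstacle. Since $p$ is étale and separated, the diagonal $\Delta_V\colon V\hookrightarrow V\times_X V$ is an open and closed immersion, so $V\times_X V=\Delta_V(V)\sqcup R$ with $R$ a clopen subscheme. If a point $(v_1,v_2)$ of $R$ had common image $p(v_1)=p(v_2)$ lying in $X\setminus j(U)$, the isomorphism $p^{-1}(X\setminus j(U))\xrightarrow{\ \sim\ }X\setminus j(U)$ would force $v_1=v_2$, a contradiction; hence both projections carry $R$ into the open subscheme $p^{-1}(j(U))=U\times_X V\subseteq V$, yielding morphisms $q_1,q_2\colon R\to U\times_X V$. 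On $\Delta_V(V)$ both $\mathrm{pr}_i^{*}s_V$ restrict to $s_V$, so they agree there. On $R$ one computes $(\mathrm{pr}_i^{*}s_V)|_R=q_i^{*}(s_V|_{U\times_X V})=q_i^{*}(s_U|_{U\times_X V})=(\pi\circ q_i)^{*}s_U$, where $\pi\colon U\times_X V\to U$ is the projection; and since $p\circ\mathrm{pr}_1=p\circ\mathrm{pr}_2$ by definition of the fibre product and $j$ is a monomorphism, one gets $\pi\circ q_1=\pi\circ q_2$, so $\mathrm{pr}_1^{*}s_V$ and $\mathrm{pr}_2^{*}s_V$ also agree on $R$. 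As $\mathcal F$ sends the disjoint union $\Delta_V(V)\sqcup R$ to a product, this yields $\mathrm{pr}_1^{*}s_V=\mathrm{pr}_2^{*}s_V$, so $(s_U,s_V)$ is a genuine descent datum and glues to the desired section in $\mathcal F(X)$. Everything outside this clopen decomposition of $V\times_X V$ and the verification that its off–diagonal part lies over $j(U)$ — the only place where the defining properties of an elementary Nisnevich covering are used — is formal.
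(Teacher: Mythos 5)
Your argument is correct in substance; the paper itself gives no proof of this lemma, citing \cite[\S 3, Lemma 1.6]{MV} and remarking that the argument extends to non-smooth schemes, and what you have written is essentially that standard argument: reduce to the sheaf condition for the two-element Nisnevich cover $\{j,p\}$ via Yoneda, observe that the only non-formal cocycle condition is over $V\times_X V$, and kill it by splitting $V\times_X V$ into the diagonal and an off-diagonal piece that is forced to live over $j(U)$. Two small caveats are worth recording. First, your clopen decomposition $V\times_X V=\Delta_V(V)\sqcup R$ uses that $p$ is separated (so that $\Delta_V$ is closed as well as open); this is automatic on $Sm/k$ and for the quasi-affine schemes the paper actually feeds into the lemma, but it is not part of the stated definition of an elementary Nisnevich covering. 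You can avoid the hypothesis entirely by replacing the disjoint union with the Zariski cover of $V\times_X V$ by the open sets $\Delta_V(V)$ and $\Omega:=\mathrm{pr}_1^{-1}(p^{-1}(jU))=\mathrm{pr}_2^{-1}(p^{-1}(jU))$ (these cover because any point outside $\Omega$ lies over $X\setminus j(U)$, where the fibre of $p$ is a single point with trivial residue extension, hence is diagonal) and invoking separatedness of $\mathcal F$ for that cover. The residue-field point just mentioned is also the precise reason your step ``$v_1=v_2$ forces $(v_1,v_2)\in\Delta_V(V)$'' is valid; as written it glosses over the distinction between the set-theoretic and scheme-theoretic diagonal. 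Second, the paper needs the lemma for singular $X$, where $X$, $U$, $V$ are not objects of $Sm/k$; there $\mathcal F(X)$ must be read as $\mathrm{Hom}_{Shv}(h_X,\mathcal F)$, and the cleanest fix is to run your descent computation at the level of presheaves, showing directly that $h_U\sqcup_{h_{U\times_XV}}h_V\to h_X$ is a local epimorphism (because $\{j,p\}$ is a Nisnevich cover) and a local monomorphism (your $V\times_X V$ analysis), rather than evaluating $\mathcal F$ on non-smooth schemes. Neither point affects the correctness of the idea.
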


The lemma mentioned in \cite{MV} is originally stated for smooth schemes; however, the same proof holds for general schemes without any modifications. We will recall some results from \cite{ASnaive,BRSiterations,MF} that will be used to prove the $\mathbb A^1$-connectedness of $X_n$. The following lemma is a standard result from \cite[Lemma 6.1.3]{MF}.

\begin{lemma}\label{stable}
A sheaf of sets $\mathcal F$ on $Sm/k$ is $\mathbb A^1$-connected if $\mathbb \pi_0^{\mathbb A^1}(\mathcal F)(K)=*$ for any finitely generated  separable extension $K$ of $k$.
\end{lemma}

The following theorem from \cite[Theorem 2.2]{BRSiterations} provides an explicit formula for computing $\pi_0^{\mathbb{A}^1}(\mathcal{F})(K)$ for any field $K/k$.

\begin{theorem}\label{formula}
Let $\mathcal F$ be a sheaf of sets. For any finitely generated field extension $K/k$, the natural map $\pi_0^{\mathbb A^1}(\mathcal F)(K) \rightarrow \mathcal L(\mathcal F)(K)$ is a bijection.
\end{theorem}

The analogue of Lemma \ref{stable} for $\mathcal{S}$ is given by the following result from \cite[Theorem 3.2]{ASnaive}.

\begin{theorem}\label{S^2}
Suppose $\mathcal F$ is a sheaf on sets such that $\mathcal S(\mathcal F)(K)=*$ for any finitely generated separable $K/k$. Then, $\mathcal S^2(\mathcal F)=*$. 
\end{theorem}

\begin{notation}\label{sq}
For any sheaf of sets $\mathcal F$ and $x$ in $\mathcal F(U)$,  we will use $[x]_j$ to denote the image of $x$ in $\mathcal S^j(\mathcal F)(U)$. 
\end{notation}

\begin{notation} 
From now on, all schemes are defined over $\mathbb C$.  
For any two schemes $X$ and $Y$, and for any $x\in X(\mathbb C)$, we will denote the morphism $Y\rightarrow \text{Spec }{\mathbb C}\xrightarrow{x} X$ by $Y\xrightarrow{x} X$.
\end{notation}

\section{Proof of Theorem \ref{main}}

We divide the proof into three parts. First, we construct the required sequence $X_n$ of varieties and fix two $\mathbb C$-valued points $\alpha_n$ and $\beta_n$ in $X_n$. Second, we provide geometric arguments to show that the images of $\alpha_n$ and $\beta_n$ cannot be equal in $\mathcal S^i(X_n)(\mathbb C)$ for $i\leq n$. Finally, we  use appropriate elementary Nisnevich covers of $X_n$ to construct maps from $X_n$ to $\mathcal S(X_{n+1})$ that map $\alpha_n$ and $ \beta_n$ to $[\beta_{n+1}]_1$ and $[\alpha_{n+1}]_1$(see Notation \ref{sq}), respectively. This allows us to construct an $\mathbb A^1$-homotopy connecting $[\alpha_n]_n$ and $[\beta_n]_n$ in $\mathcal S^n(X_n)$, thereby ensuring that the images of $\alpha_n$ and $\beta_n$ are equal in $\mathcal S^{n+1}(X_n)$.

\subsection{Construction of the varieties $X_n$}
 
For $n\geq 0$, the variety $X_n$ is quasi affine and has dimension $n+1$. In proving the theorem, it will be useful to have the explicit equations defining $X_n$. Therefore, we will  construct the affine varieties $Y_n$ in $\mathbb A^{2n+1}_{\mathbb C}$, such that $X_n$ is an open subvariety of $Y_n$. We begin by constructing the varieties $X_n$. Set $X_0=Y_0:=\mathbb A^1_{\mathbb C}$. 

\begin{construction}\label{construct X_1}
We now recall the construction of surface $S_1$ from \cite[Construction 4.3]{BHSconnected} which will serve as our $X_1$.

\begin{enumerate}
\item Let $\lambda_i\in \mathbb C\setminus 0$ for $i=1,2,3$, and let $f(x_1)=(x_1-\lambda_1)(x_1-\lambda_2)(x_1-\lambda_3)$ with $\lambda =\sqrt{-\lambda_1\lambda_2\lambda_3}$. Define $E$ as the following planar curve,
    \[
    E:=\text{Spec }\mathbb C[x_1,y_1]/\langle y_1^2-f(x_1)\rangle.
    \]
Let $\pi:E\rightarrow \mathbb A^1$  be the projection onto $x_1$-axis. Thus, $\pi^{-1}(0)=\{(0,\pm\lambda)\}$.

\item Define $Y_1$ and $X_1$ as the following surfaces in $\mathbb A^3_{\mathbb C}$,
\[Y_1:=\text{Spec } \mathbb C[x_0,x_1,y_1]/\langle y_1^2-x_0^2f(x_1)\rangle\text{ , }  X_1:= Y_1 \setminus \{(0,0,0)\}.\] Let $i_1$ denote the inclusion of $X_1$ into $Y_1$.

\item Let $\bar\phi_1$ and $\bar\psi_1: Y_1\rightarrow X_0$  be the 
projection onto the $x_0$-axis and $x_1$-axis, respectively. Define $\phi_1$ and $\psi_1$ as the restrictions of $\bar\phi_1$ and $\bar\psi_1$ to $X_1$. The surface $X_1$ can be viewed as a family of curves parametrized by $\mathbb A^1_{\mathbb C}$ via $\phi_1$, where the fiber over $0$ is $\mathbb G_m$ and the fiber over any nonzero point is $E$. 

\item Let $\alpha_1=(1,0,\lambda)$ and $\beta_1=(0,1,0)$. Then, $\alpha_1$ is contained in the copy of $E$ in $X_1$ corresponding to $x_0=1$, while $\beta_1$ is contained in the copy of $\mathbb G_m$ in $X_1$ corresponding to $x_0=0$. $\mathbb A^1$-rigidity of $E$ and $\mathbb G_m$ will be used to show that $\alpha_1$ and $\beta_1$ cannot be connected by a chain of $\mathbb A^1_{\mathbb C}$ in $X_1$.

\item Let $\mathbb A^1_{\mathbb C}\times E$ denote the surface $\text{Spec}\mathbb C[x_0,x_1,y_1]/\langle y_1^2-f(x_1)\rangle$, and let $\rho_1:\mathbb A^1_{\mathbb C}\times E\rightarrow Y_1 $ be the morphism given by $(x_0,x_1,y_1)\mapsto(x_0,x_1,x_0y_1)$. Then, $\rho_1$ is an isomorphism outside $\bar\phi_1^{-1}(\{0\})$. $\rho_1$ will be used to construct an $\mathbb A^1$-homotopy in $\mathcal S(X_1)$ connecting $[\alpha_1]_1$ and $[\beta_1]_1$.

\end{enumerate}
\end{construction}

Next, we inductively define the varieties $X_n$ and the morphisms $\phi_n,\psi_n: X_n\rightarrow X_{n-1}$.

\begin{construction}\label{construct X_n}
      For $n\geq 2$, assuming that $X_{n-1}$, $ \phi_{n-1}$ and $\psi_{n-1}$ are defined, we define $X_n$ as the pullback of the following diagram.
		\[
			\begin{tikzcd}
				X_{n} \arrow[r,rightarrow, "\psi_n"] \arrow[d,swap,rightarrow, "\phi_n"] & X_{n-1}\arrow[d,rightarrow, "\phi_{n-1}"] \\
				X_{n-1} \arrow[r,swap,rightarrow, "\psi_{n-1}"] & X_{n-2}
			\end{tikzcd}
		\] 
\end{construction}

We now define $\alpha_n$ and $\beta_n$. These are the $\mathbb C$-valued points of $X_n$ whose images are not equal in $\mathcal S^n(X_n)(\mathbb C)$ but will be equal in $\mathcal S^{n+1}(X_{n})(\mathbb C)$. We start by defining $\alpha_2$ and $\beta_2$. Since $\psi_1(\alpha_1)=\phi_1(\beta_1)=0$, the pair $(\alpha_1,\beta_1)$ induces the morphism 
\[
\text{Spec }\mathbb C\xrightarrow{(\alpha_1,\beta_1)}X_1\times_{\psi_1,X_0,\phi_1}X_1.
\] 
Similarly, since $\psi_1(\beta_1)=\phi_1(\alpha_1)=1$, the pair $(\beta_1,\alpha_1)$ will induce the morphism
\[
\text{Spec }\mathbb C\xrightarrow{(\beta_1,\alpha_1)}X_1\times_{\psi_1,X_0,\phi_1}X_1.
\] 
Define $\alpha_2,\beta_2:\text{Spec }\mathbb C\rightarrow X_2$ by the following morphisms,
\[
\alpha_2:=(\alpha_1,\beta_1)\quad \text{and} \quad\beta_2:=(\beta_1,\alpha_1).
\]
To define $\alpha_n$ and $\beta_n$ for $n\geq 3$, an alternative definition of $X_n$ will be more convenient.

\begin{remark}\label{fold}
$X_n$ can be realised as the $n$-fold fiber product of $X_1$ over $\mathbb A^1_\mathbb C$ as follows:
For $i\geq 2$, we see immediately that $X_i=X_{1}\times_{\mathbb A^1_{\mathbb C}}X_{i-1}$ via the following pullback square.
            \[
			\begin{tikzcd}
				X_{i}  \arrow[d,swap,rightarrow, "\phi_{2}\circ\dots\phi_i"] \arrow[r,rightarrow, "\psi_{i}"] & X_{i-1} \arrow[d, rightarrow, "\phi_{1}\circ\dots\phi_{i-1}"]\\
				X_{1}  \arrow[r,,swap,rightarrow,"\psi_{1}" ] & \mathbb A^1_{\mathbb C}
			\end{tikzcd}
		\]
Applying this definition of $X_i$ repeatedly, we will find that $X_n$ can also be expressed as the $n$-fold fiber product, specifically, $X_n=X_1\times_{\psi_1,\mathbb A^1_{\mathbb C},\phi_1} X_1\dots \times_{\psi_1,\mathbb A^1_{\mathbb C},\phi_1} X_{1}$. 
\end{remark}

For all $n\geq 3$, $\alpha_n$ and $\beta_n$ are defined using the above $n$-fold fiber product description of $X_n$. The morphisms are given by
\[
\alpha_n : \text{Spec }\mathbb C\xrightarrow{(a_1,\dots,a_n)} X_n \quad\text{and}\quad \beta_n :\text{Spec }\mathbb C\xrightarrow{(b_1,\dots,b_n)}  X_n,
\]
where $ a_i,b_i\in\{\alpha_1,\beta_1\}$  such that
$(a_1,\dots,a_n)$ and $(b_1,\dots,b_n)$ form alternating sequences of $\alpha_1$ and $\beta_1$ with $a_1=\alpha_1$ and $b_1=\beta_1$. More precisely, 
\[
\alpha_n:=(\alpha_1,\beta_1,\alpha_1,\dots,a_n)\quad\text{and}\quad\beta_n:=(\beta_1,\alpha_1,\beta_1,\dots,b_n).
\]
Since $\psi_1(\beta_1)=\phi_1(\alpha_1)=1$ and $\psi_1(\alpha_1)=\phi_1(\beta_1)=0$, the above definitions are well defined. Similar to the definition of $X_n$, we define $Y_n$ inductively.

\begin{construction}\label{construct Y_n} 
For $n\geq 2$, assuming $Y_{n-1}$, $ \bar{\phi}_{n-1}$ and $\bar \psi_{n-1}$ are defined, we define $Y_n$ using the following pullback square.
		\[
			\begin{tikzcd}
				Y_{n} \arrow[r,rightarrow, "\bar \psi_n"] \arrow[d,swap,rightarrow, "\bar \phi_n"] & Y_{n-1}\arrow[d,rightarrow, "\bar \phi_{n-1}"] \\
				Y_{n-1} \arrow[r,swap,rightarrow, "\bar \psi_{n-1}"] & Y_{n-2}
			\end{tikzcd}
		\]
\end{construction}

The following simple lemma provides a geometric description of $X_n$ and $Y_n$.

\begin{lemma}\label{geometric}
Let $n\geq 1$.
\begin{enumerate}

\item $X_n$ is an open subscheme of $Y_n$. Moreover, $\phi_n$ and $\psi_n$ are  the restrictions of $\bar\phi_n$ and $\bar\psi_n$ to $X_n$.
        
\item $Y_n=\text{Spec }\mathbb C[x_0,x_1,y_1\dots,x_n,y_n]/\langle\{y_i^2-x_{i-1}^2f(x_i)\}^n_{i=1}\rangle$. The morphism $\bar\phi_n$ is given by 
\[
(x_0,x_1,y_1,\dots x_n, y_{n})\mapsto (x_0,x_1,y_1\dots, x_{n-1},y_{n-1}),
\]
and the morphism $\bar\psi_n$ is given by 
\[
(x_0,x_1,y_1,\dots x_n, y_{n})\mapsto (x_1,x_2,y_2\dots, x_{n},y_{n}).
\]

\end{enumerate}
\end{lemma}

\begin{proof}
 For (1), we first claim that similar to $X_n$, $Y_n$ can be obtained as the $n$-fold fiber product of $Y_1$ over $\mathbb A^1_\mathbb C$. Indeed, by replacing the roles of $\phi_i$ and $\psi_i$ with $\bar\phi_i$ and $\bar\psi_i$ in Remark \ref{fold}, we find that 
\[
Y_n=Y_1\times_{\bar\psi_1,\mathbb A^1_{\mathbb C},\bar\phi_1} Y_1\dots \times_{\bar\psi_1,\mathbb A^1_{\mathbb C},\bar\phi_1} Y_{1}.
 \]
Now, since $i_1:X_1\rightarrow Y_1$ is an open immersion, and since $\phi_1$ and $\psi_1$ are simply the restrictions of $\bar\phi_1, \bar\psi_1$ to $X_1$ respectively,  the following morphism
\[
X_1\times_{\psi_1,\mathbb A^1_{\mathbb C},\phi_1} X_1\dots \times_{\psi_1,\mathbb A^1_{\mathbb C},\phi_1} X_{1}\xrightarrow{i_1\times\dots \times i_1}Y_1\times_{\bar\psi_1,\mathbb A^1_{\mathbb C},\bar\phi_1} Y_1\dots \times_{\bar\psi_1,\mathbb A^1_{\mathbb C},\bar\phi_1} Y_{1}
\]
must be an open immersion. Now, $\phi_n,\psi_n:X_{n-1}\times_{\psi_{n-1},X_{n-2},\phi_{n-1}}X_{n-1}\rightarrow X_{n-1}$ are the first and second projections onto $X_{n-1}$, respectively. Therefore, in the $n$-fold fiber product description,  $\phi_n$ and $\psi_n$ will project $X_n$ onto the first and last $n-1$ factors of $X_n$, respectively. Similarly, $\bar \phi_n$ and $\bar \psi_n$ will project $Y_n$ onto the first and last $n-1$ factors of $Y_n$, respectively. It follows that the morphisms $\phi_n$, $\psi_n$ are precisely the restrictions of $\bar{\phi_n}, \bar{\psi_n}$ to $X_n$.

For (2), because  $Y_n=Y_1\times_{\bar\psi_1,\mathbb A^1_{\mathbb C},\bar\phi_1} Y_1\dots \times_{\bar\psi_1,\mathbb A^1_{\mathbb C},\bar\phi_1} Y_{1}$, the coordinate ring $A_n$ of $Y_n$ is given by
\[
A_n =\frac{\mathbb C[x_0,x_1,y_1]}{\langle y_1^2-x_{0}^2f(x_1)\rangle}\otimes_{\bar\psi_1^*,\mathbb C[x],\bar\phi_1^*}\frac{\mathbb C[x_0,x_1,y_1]}{\langle y_1^2-x_{0}^2f(x_1)\rangle}\dots\otimes_{\bar\psi_1^*,\mathbb C[x],\bar\phi_1^*}\frac{\mathbb C[x_0,x_1,y_1]}{\langle y_1^2-x_{0}^2f(x_1)\rangle}.
\]
Since $\bar\phi_1$ and  $\bar\psi_1$ project onto the $x_0$ and $x_1$ axis, respectively, this tensor product equals
\[
A_n=\text{Spec }\mathbb C[x_0,x_1,y_1,x_2,y_2,\dots,x_n,y_n]/\langle\{y_i^2-x_{i-1}^2f(x_i)\}^n_{i=1}\rangle.
\]
Moreover, since $\bar \phi_n$ and $\bar \psi_n$ will project $Y_n$ onto the first and last $n-1$ factors of $Y_n$, respectively, $\bar \phi_n$ is the projection onto the coordinates $(x_0,x_1,y_1\dots, x_{n-1},y_{n-1})$,  and $\bar \psi_n$ is the projection onto the coordinates $(x_1,x_2,y_2\dots, x_{n},y_{n})$. 
\end{proof}

\subsection{Geometric properties of $X_n$}

In this subsection, we apply the universal property of $\mathcal S$ from Lemma \ref{universal property of S} to the morphism $\psi_n$ in order to construct maps $\mathcal S(X_n)\rightarrow X_{n-1}$. Then, using an inductive argument, we show that the images of $\alpha_n$ and $\beta_n$ cannot be equal in $\mathcal S^i(X_n)(\mathbb C)$ for $i\leq n$.

\begin{lemma}\label{fiber}
 Let $n\geq 1$. Then the fibers of closed points under $\phi_n$ are $\mathbb A^1$-rigid.
\end{lemma}

\begin{proof}
From Lemma \ref{geometric}, it follows that $\phi_n$ is the restriction of the affine map $Y_n \rightarrow Y_{n-1}$ corresponding to the natural ring homomorphism of coordinate rings:
\[
\frac{\mathbb C[x_0,x_1,y_1,\dots,x_{n-1},y_{n-1}]}{\langle\{y_i^2-x_{i-1}^2f(x_i)\}^{n-1}_{i=1}\rangle}\xrightarrow{\bar \phi_n^*} \frac{\mathbb C[x_0,x_1,y_1,\dots,x_{n-1},y_{n-1}][x_{n},y_{n}]}{\langle\{y_i^2-x_{i-1}^2f(x_i)\}^{n-1}_{i=1}\rangle+\langle y_{n}^2-x_{n-1}^2f(x_n)\rangle}. 
\]
Now, let $Q$ be a closed point of $X_{n-1}$. Thus, $Q=(a_0,a_1,b_1,\dots,a_{n-1},b_{n-1})$ where $a_i,b_i\in \mathbb C$. Hence, $\phi_n^{-1}(Q)$ is isomorphic to $\text{Spec }\frac{\mathbb C[x_n,y_n]}{\langle y_n^2-a_{n-1}^2 f(x_n) \rangle} \setminus (0,0,0)$. If $a_{n-1}\neq 0$, then $\phi_n^{-1}(Q)$ is isomorphic to $E$, and otherwise, it is isomorphic to $\text{Spec }\frac{\mathbb C[x_n,y_n]}{\langle y_n^2 \rangle} \backslash (0,0,0)$. Since both of these varieties are $\mathbb A^1$-rigid, this completes the proof.
\end{proof}

\begin{lemma}\label{constant}
Let $n\geq 1$ and let $\gamma$ be any morphism $\mathbb A^1_{\mathbb C}\rightarrow X_n$. Then $\psi_n \circ \gamma$ is a constant morphism.   
\end{lemma}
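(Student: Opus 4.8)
The plan is to prove the statement by induction on $n$, using the recursive presentation $X_n=X_{n-1}\times_{X_{n-2}}X_{n-1}$ (with $X_0=\mathbb A^1_{\mathbb C}$) together with Lemma~\ref{fibre}. For the base case $n=1$ I would argue directly from the defining equation of $X_1$ recorded in Remark~\ref{affine}: a morphism $\gamma\colon\mathbb A^1_{\mathbb C}\to X_1\subseteq\mathbb A^3_{\mathbb C}$ is the same as a triple $(p,s,r)\in\mathbb C[t]^3$ with $r^2=p^2f(s)$ and with $(p(t),s(t),r(t))\neq(0,0,0)$ for every $t\in\mathbb C$ (so that the image avoids the deleted origin of $Y_1$), and $\psi_1\circ\gamma$ is the morphism $t\mapsto s(t)$; so the task is to show that $s$ is constant. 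If $p\equiv 0$, then $r\equiv 0$ and the condition on the image forces $s$ to be nowhere vanishing, hence constant. If $p\not\equiv 0$, then $(r/p)^2=f(s)\in\mathbb C[t]$, so $u:=r/p$ lies in $\mathbb C[t]$ by integral closedness of $\mathbb C[t]$ in $\mathbb C(t)$, and $u^2=f(s)$; thus $t\mapsto(s(t),u(t))$ is a morphism $\mathbb A^1_{\mathbb C}\to E$, which is constant because $E$ is $\mathbb A^1$-rigid (as already used in the proof of Lemma~\ref{fibre}), and in particular $s$ is constant. (Alternatively one can localise this argument using the chart $X_1\setminus V(x_0)\cong\mathbb G_m\times E$ coming from Remark~\ref{birational}.)

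For the inductive step, assume $n\geq 2$ and that the lemma holds for $n-1$. Given $\gamma\colon\mathbb A^1_{\mathbb C}\to X_n$, the defining pullback square of $X_n$ presents $\gamma$ as a pair $(\phi_n\circ\gamma,\ \psi_n\circ\gamma)$ of morphisms to $X_{n-1}$ satisfying $\psi_{n-1}\circ\phi_n\circ\gamma=\phi_{n-1}\circ\psi_n\circ\gamma$. Applying the induction hypothesis to $\phi_n\circ\gamma$ shows that $\psi_{n-1}\circ\phi_n\circ\gamma$ is a constant morphism, say with value a closed point $Q$ of $X_{n-2}$; hence $\phi_{n-1}\circ\psi_n\circ\gamma$ is the constant morphism at $Q$, and therefore $\psi_n\circ\gamma$ factors through the scheme-theoretic fibre $\phi_{n-1}^{-1}(Q)$. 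Since $\phi_{n-1}^{-1}(Q)$ is $\mathbb A^1$-rigid by Lemma~\ref{fibre}, it follows that $\psi_n\circ\gamma$ is constant, completing the induction.

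The main obstacle is the base case $n=1$: the inductive step is essentially formal once Lemma~\ref{fibre} is available, whereas $n=1$ is where the actual geometry of $X_1$ enters, ultimately through the non-existence of non-constant morphisms $\mathbb A^1\to E$. Within the base case, the points that require care are the reduction from the identity $r^2=p^2f(s)$ to $u=r/p$ being an honest polynomial, and the treatment of the degenerate case $p\equiv 0$ stemming from the puncture at the origin of $Y_1$.
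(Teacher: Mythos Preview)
Your proof is correct and follows essentially the same approach as the paper: induction on $n$, with the inductive step carried out exactly as in the paper via the pullback square and Lemma~\ref{fibre}, and the base case reduced to the $\mathbb A^1$-rigidity of the elliptic curve $E$. The only difference is cosmetic: for $n=1$ you extract an honest morphism $\mathbb A^1_{\mathbb C}\to E$ algebraically using the integral closedness of $\mathbb C[t]$, whereas the paper phrases the same step geometrically via the birational map $\rho_1$ of Remark~\ref{birational}, obtaining a rational map $\mathbb A^1_{\mathbb C}\dashrightarrow E$ and then extending it to the projective closure $\bar E$.
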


\begin{proof}
We prove this by induction on $n$. Let's verify the base case for $n=1$. Let $\gamma$ be a morphism $\mathbb A^1_{\mathbb C}\rightarrow X_1$. Recall that, $\rho_1: \mathbb A^1_C \times E \rightarrow Y_1$(see Construction \ref{construct X_1},(5)) defined by $(x_0,x_1,y_1)\mapsto (x_0,x_1,x_0y_1)$ is an isomorphism outside the fiber $\bar\phi_1^{-1}(0)$. Therefore, it induces a rational map $X_1\dashrightarrow \mathbb A^1_{\mathbb C}\times E$. Since $\psi_1$  is the projection onto the $x_1$- axis, $\psi_1$ is the same as the morphism induced by the rational map $X_1\dashrightarrow \mathbb A^1_{\mathbb C}\times E\rightarrow E\xrightarrow{\pi} \mathbb A^1_{\mathbb C}$. Now, either the image of $\psi_1\circ\gamma$ lies completely in the fiber $\phi_1^{-1}(0)$, or $\psi_1\circ\gamma$ factors through the rational map $\mathbb A^1_{\mathbb C}\dashrightarrow E$, which can be completed to a morphism $\mathbb A^1_{\mathbb C}\rightarrow \bar E$, where $\bar E$ is the projective closure of $E$ . Since both $\phi_1^{-1}(0)$ and  $\bar E$ are $\mathbb A^1$-rigid, this completes the argument for the case $n=1$.
    
Assuming the lemma holds for $n-1$, we will prove it for $n$. Let $\gamma:\mathbb A^1_{\mathbb C}\rightarrow X_n$ be fixed. Recall that, $X_n$ is defined by the following Cartesian square:
    \[
			\begin{tikzcd}
				X_{n} \arrow[r,rightarrow, "\psi_n"] \arrow[d,swap,rightarrow, "\phi_n"] & X_{n-1}\arrow[d,rightarrow, "\phi_{n-1}"] \\
				X_{n-1} \arrow[r,swap,rightarrow, "\psi_{n-1}"] & X_{n-2}
			\end{tikzcd}
    \]
    
Define $\gamma_1:= \phi_n\circ\gamma$ and $\gamma_2:= \psi_n\circ\gamma$. We aim to show that $\gamma_2$ is a constant morphism. By the induction hypothesis, $\psi_{n-1}\circ\gamma_1$ is constant. From the commutativity of the square above, it follows that $\phi_{n-1}\circ\gamma_2$ is constant. This implies that the image of $\gamma_2$ lies in a fiber of $\phi_{n-1}$, which is $\mathbb A^1$-rigid by Lemma \ref{fiber}. Therefore, it follows that $\gamma_2$ must be a constant morphism.
\end{proof}

\begin{lemma}\label{factor}
The morphism $\psi_n:X_n\rightarrow X_{n-1}$ in $Shv(Sm/k)_{Nis}$ factors through the epimorphism $X_n\rightarrow \mathcal S(X_n)$. 
\end{lemma}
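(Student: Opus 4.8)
The plan is to combine Lemma~\ref{constant} with the universal property of $\mathcal S(X_n)$ as a sheafification. Recall that $\mathcal S(X_n)$ is the Nisnevich sheafification of the presheaf $Q\colon U\mapsto X_n(U)/{\sim}$, and that $X_{n-1}$ is already a sheaf; so, by the sheafification adjunction, producing a morphism $\overline{\psi_n}\colon \mathcal S(X_n)\to X_{n-1}$ with $\psi_n=\overline{\psi_n}\circ\bigl(X_n\to\mathcal S(X_n)\bigr)$ amounts to producing a morphism of presheaves $Q\to X_{n-1}$ whose precomposition with the quotient map $X_n\to Q$ is $\psi_n$. This is possible exactly when, for every $U\in Sm/k$, the map $\psi_n(U)\colon X_n(U)\to X_{n-1}(U)$ is constant on $\sim$-classes; and since $\sim$ is generated by $\mathbb A^1$-homotopy, it suffices to show that $\psi_n\circ h(0)=\psi_n\circ h(1)$ in $X_{n-1}(U)$ for every $U\in Sm/k$ and every $h\in X_n(\mathbb A^1_{\mathbb C}\times U)$.

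To do this I would fix such an $h$, view it as a morphism of schemes $h\colon\mathbb A^1_{\mathbb C}\times U\to X_n$, and put $f:=\psi_n\circ h\colon\mathbb A^1_{\mathbb C}\times U\to X_{n-1}$. The key claim is that $f$ factors through the projection $\mathrm{pr}\colon\mathbb A^1_{\mathbb C}\times U\to U$; granting it, the identities $\mathrm{pr}\circ s_0^U=\mathrm{pr}\circ s_1^U=\mathrm{id}_U$ give $f\circ s_0^U=f\circ s_1^U$, which is precisely $\psi_n\circ h(0)=\psi_n\circ h(1)$. For the claim, restrict $h$ along $\mathbb A^1_{\mathbb C}\times\{u\}\hookrightarrow\mathbb A^1_{\mathbb C}\times U$ for each $u\in U(\mathbb C)$: this yields a morphism $\mathbb A^1_{\mathbb C}\to X_n$, so Lemma~\ref{constant} shows that $f$ is constant on every fibre $\mathbb A^1_{\mathbb C}\times\{u\}$. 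Consequently $f$ agrees with $g:=f\circ s_0^U\circ\mathrm{pr}$ on all $\mathbb C$-points of $\mathbb A^1_{\mathbb C}\times U$. As $\mathbb A^1_{\mathbb C}\times U$ is reduced and of finite type over the algebraically closed field $\mathbb C$ its $\mathbb C$-points are dense, and as $X_{n-1}$ is separated the equaliser of $f$ and $g$ is closed; hence $f=g$, and $g$ manifestly factors through $\mathrm{pr}$. The remaining verification, that these pointwise statements glue into a morphism of presheaves $Q\to X_{n-1}$ which then sheafifies to $\overline{\psi_n}$, is routine naturality.

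The only step that is not purely formal is the passage from ``$f$ is constant on the $\mathbb C$-fibres of $\mathbb A^1_{\mathbb C}\times U\to U$'' — which is all Lemma~\ref{constant} directly delivers — to ``$f$ factors through $U$''; this is exactly where separatedness of the quasi-affine variety $X_{n-1}$ and density of closed points over $\mathbb C$ enter. It is also the reason the hypothesis of working with varieties over $\mathbb C$ is convenient here: the density argument means we never need an analogue of Lemma~\ref{constant} over the residue fields at non-closed points of $U$, so its statement for $\mathbb A^1_{\mathbb C}$ alone suffices.
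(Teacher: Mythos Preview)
Your argument is correct and follows essentially the same approach as the paper: both reduce the factorisation to showing that $\psi_n\circ h$ is constant along $\mathbb A^1$ for every $\mathbb A^1$-homotopy $h$, invoke Lemma~\ref{constant} on fibres over closed points of $U$, and then use separatedness of $X_{n-1}$ together with density of closed points over $\mathbb C$ to upgrade this to all of $U$. The only cosmetic difference is that the paper compares the two maps $G(s),G(t)\colon U\to X_{n-1}$ on closed points of $U$, whereas you compare $f$ with $f\circ s_0^U\circ\mathrm{pr}$ on closed points of $\mathbb A^1_{\mathbb C}\times U$; the content is identical.
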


\begin{proof}
By Lemma \ref{universal property of S}, it suffices to show that for any smooth scheme $U$ and any $\mathbb A^1$-homotopy $F\in X_n(\mathbb A^1\times U)$, $\psi_n\circ F$ is a constant $\mathbb A^1$-homotopy.
Let  $G:=\psi_n\circ F$ and let $s,t\in \mathbb C$. We need to show that the morphisms $G(t),G(s):U\rightarrow X_{n-1}$ are identical. Since $X$ is separated, the set $S:=\{x\in U|G(t)(x)=G(s)(x)\}$  forms a closed subscheme of $U$. From Lemma \ref{constant}, we know that $U(\mathbb C)\subset S$, which further implies that $U=S$. Hence, $G(s)=G(t)$ for any $s,t\in \mathbb C$, and the result follows. 
\end{proof}

\begin{theorem}\label{neq}
$[\alpha_n]_n \neq [\beta_n]_n$ for all $n$.
\end{theorem}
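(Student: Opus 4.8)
The plan is to deduce the statement from Lemma~\ref{factor} by an induction on $n$ that ``peels off'' one application of $\mathcal S$ at each step via the projection $\psi_n$, using the fact that $\psi_n$ \emph{interchanges} the two distinguished points once one shifts the alternating sequences that define them.

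For the base case $n=1$, I would apply Lemma~\ref{factor} with $n=1$: the projection $\psi_1\colon X_1\to X_0=\mathbb A^1_{\mathbb C}$ factors through $X_1\to\mathcal S(X_1)$, say as $X_1\to\mathcal S(X_1)\xrightarrow{\bar\psi_1}\mathbb A^1_{\mathbb C}$. Since $\psi_1(\alpha_1)=0$ and $\psi_1(\beta_1)=1$ are distinct $\mathbb C$-points of $\mathbb A^1_{\mathbb C}$, and $\bar\psi_1$ sends $[\alpha_1]_1$ and $[\beta_1]_1$ to $0$ and $1$ respectively, these classes are distinct in $\mathcal S(X_1)(\mathbb C)$.

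For the inductive step, assume $[\alpha_{n-1}]_{n-1}\neq[\beta_{n-1}]_{n-1}$ in $\mathcal S^{n-1}(X_{n-1})(\mathbb C)$, with $n\geq 2$. By Lemma~\ref{factor}, $\psi_n$ factors as $X_n\to\mathcal S(X_n)\xrightarrow{\bar\psi_n}X_{n-1}$. I would then apply the functor $\mathcal S^{n-1}$ to $\bar\psi_n$ to obtain $\mathcal S^{n-1}(\bar\psi_n)\colon\mathcal S^n(X_n)\to\mathcal S^{n-1}(X_{n-1})$ and, using naturality of the canonical epimorphism $\mathcal F\to\mathcal S(\mathcal F)$ applied repeatedly, check that on $\mathbb C$-points this map sends the class of any $x\in X_n(\mathbb C)$ in $\mathcal S^n(X_n)(\mathbb C)$ to the class of $\psi_n(x)$ in $\mathcal S^{n-1}(X_{n-1})(\mathbb C)$. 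It then remains to compute $\psi_n(\alpha_n)$ and $\psi_n(\beta_n)$: by the $n$-fold fibre product description in Remark~\ref{fold}, $\psi_n$ is the projection deleting the first factor, so $\psi_n(\alpha_n)=(a_2,\dots,a_n)$ and $\psi_n(\beta_n)=(b_2,\dots,b_n)$; a truncation of an alternating sequence is again alternating, and since $a_2=\beta_1$ and $b_2=\alpha_1$ we get $\psi_n(\alpha_n)=\beta_{n-1}$ and $\psi_n(\beta_n)=\alpha_{n-1}$. Hence $\mathcal S^{n-1}(\bar\psi_n)$ sends $[\alpha_n]_n\mapsto[\beta_{n-1}]_{n-1}$ and $[\beta_n]_n\mapsto[\alpha_{n-1}]_{n-1}$, so an equality $[\alpha_n]_n=[\beta_n]_n$ would force $[\beta_{n-1}]_{n-1}=[\alpha_{n-1}]_{n-1}$, contradicting the inductive hypothesis.

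The main point requiring care is the bookkeeping in the inductive step, namely that applying $\mathcal S^{n-1}$ to $\bar\psi_n$ and composing with the canonical maps really does track $\mathbb C$-points as claimed; this is a formal consequence of the naturality of the unit $\mathrm{id}\Rightarrow\mathcal S$, obtained by building up the obvious commuting tower of squares, but it is worth writing out explicitly. The rest --- the base case, the identification of $\psi_n$ with a coordinate projection, and the shift of the alternating sequences --- is immediate from the constructions above.
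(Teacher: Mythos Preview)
Your inductive step is exactly the paper's: factor $\psi_n$ through $\mathcal S(X_n)$ via Lemma~\ref{factor}, apply $\mathcal S^{n-1}$, and use that $\psi_n$ interchanges the distinguished points with those one level down. The bookkeeping you flag is indeed purely formal.

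The only difference is the base case. The paper handles $n=1$ by a direct geometric argument: it shows that any morphism $\gamma\colon\mathbb A^1_{\mathbb C}\to X_1$ whose image contains $\alpha_1$ must be constant, by using the birational map $\rho_1$ to produce a rational map to $E$, completing it to a morphism $\mathbb A^1_{\mathbb C}\to\bar E$, and invoking $\mathbb A^1$-rigidity of $\bar E$ and of the punctured affine line. You instead simply invoke Lemma~\ref{factor} at $n=1$ and observe that $\psi_1(\alpha_1)=0\neq 1=\psi_1(\beta_1)$ in $X_0(\mathbb C)=\mathbb C$. Since Lemma~\ref{factor} (via Lemma~\ref{constant}) is already established for $n=1$, this is valid and cleaner: your base case is just the inductive mechanism run once more against the anchor $X_0=\mathbb A^1_{\mathbb C}$. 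The paper's version, by contrast, yields the slightly stronger statement that every $\mathbb A^1$-homotopy through $\alpha_1$ is constant, but this extra information is not used elsewhere.
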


\begin{proof}
We prove the theorem by induction on $n$. 
For $n=1$, we need to show that $[\alpha_1]_1$ and $[\beta_1]_1$ cannot be connected by an $\mathbb A^1$-chain homotopy. To establish this, it suffices to show that any morphism $\gamma :\mathbb A^1_{\mathbb C}\rightarrow X_1$ containing $\alpha_1=(1,0,\lambda)$ in its image is a constant morphism. Recall that, the morphism $\rho_1: \mathbb A^1_{\mathbb C}\times_{\mathbb C}E\rightarrow Y_1$ defined by $(x_0,x_1,y_1)\rightarrow (x_0,x_1,x_0y_1)$ is an isomorphism outside $\bar\phi_1^{-1}(\{0\})$. Since $\alpha_1\notin \bar \phi_1^{-1}(\{0\})$, $\rho_1^{-1}$ induces a rational map $\gamma':\mathbb A^1_{\mathbb C}\dashrightarrow \mathbb A^1_{\mathbb C}\times_{\mathbb C}E\rightarrow E $. This rational map can be completed to a morphism $\mathbb A^1_{\mathbb C}\rightarrow \bar E$. Consequently, $\gamma'$ is constant, implying that the image of $\gamma$ is contained in affine line corresponding to $\rho_1(\mathbb A^1_{\mathbb C}\times(0,\lambda))$. Since $\rho((0,0,\lambda))=(0,0,0)$ is not in $X_1$, the image of $\gamma$ must be contained in affine line excluding origin, which is $\mathbb A^1$-rigid. Thus, $\gamma$ must be a constant morphism.

Assuming the theorem holds for $n-1$, we will prove it for $n$.
On the contrary, assume that $[\alpha_n]_{n}=[\beta_n]_{n}$ in $\mathcal S^{n}(X_n)(\mathbb C)$. 
Since the morphism $\psi_n:X_n\rightarrow X_{n-1}$ factors through the morphism $X_n\rightarrow \mathcal S(X_n)$ by Lemma \ref{factor}, we obtain the following commutative diagram:
\[
		\begin{tikzcd}[column sep=large]
			\mathcal S^{n-1}(X_{n})(\mathbb C) \arrow[r,rightarrow, "\mathcal S^{n-1}(\psi_n)"] \arrow[d,rightarrow, ] & \mathcal S^{n-1}(X_{n-1})(\mathbb C)  \\
			\mathcal S^{n}(X_{n}) (\mathbb C)\arrow[ur,rightarrow]
		\end{tikzcd}
\]
Since $\psi_n(\alpha_n)=\beta_{n-1}$ and $\psi_n(\beta_n)=\alpha_{n-1}$, and we have assumed that $[\alpha_n]_{n}=[\beta_n]_{n}$, it follows from the commutativity of the above diagram that $[\alpha_{n-1}]_{n-1}=[\beta_{n-1}]_{n-1}$. This conclusion contradicts the induction hypothesis. Therefore, the theorem holds.
\end{proof}

\subsection{$\mathbb A^1$-homotopies in $\mathcal S^n(X_n)$}

An explicit $\mathbb A^1$-homotopy between $[\alpha_1]_1$ and $[\beta_1]_1$ in $\mathcal S(X_1)$ has been constructed in \cite[Construction 4.3]{BHSconnected}. This will be the key input in the following construction of $\mathbb A^1$-homotopies in $\mathcal S^n(X_n)$.

\begin{theorem}\label{homotopy}
$[\alpha_n]_n$ and $[\beta_n]_n$ are $\mathbb A^1$-homotopic in $\mathcal S^n(X_n)$. 
\end{theorem}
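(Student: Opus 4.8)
The plan is to prove Theorem \ref{homotopy} by induction on $n$, producing in each step an explicit $\mathbb A^1$-homotopy. For the base case $n=1$ one uses the homotopy of \cite{BHSconnected} recalled in the first remark of this section: take the elementary Nisnevich cover $p_1\sqcup p_2\colon V_1\sqcup V_2\to\mathbb A^1_{\mathbb C}$ and the morphisms $h_1\colon V_1\to X_1,\ (x_1,y_1)\mapsto(1,x_1,y_1)$ and $h_2\colon V_2\to X_1,\ x\mapsto(0,x,0)$; their restrictions to $V_1\times_{\mathbb A^1_{\mathbb C}}V_2$ (which sits over $\mathbb A^1_{\mathbb C}\setminus\{0\}$, so there $x_1\neq 0$) are joined by the $\mathbb A^1$-homotopy $(t,(x_1,y_1))\mapsto(1-t,x_1,(1-t)y_1)$ inside $X_1$, hence become equal in $\mathcal S(X_1)$, so by Lemma \ref{pushout} they glue to a morphism $\mathbb A^1_{\mathbb C}\to\mathcal S(X_1)$. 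Evaluating at the $\mathbb C$-point $0$, which lifts uniquely along the étale map $p_1$ to the point $(0,\lambda)\in V_1$, yields $[\alpha_1]_1$, and evaluating at $1\in V_2$ yields $[\beta_1]_1$; thus this morphism is the required $\mathbb A^1$-homotopy in $\mathcal S(X_1)$ connecting $[\alpha_1]_1$ and $[\beta_1]_1$.

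For the inductive step assume Theorem \ref{homotopy} for $n-1$, i.e.\ there is $\eta\in\mathcal S^{n-1}(X_{n-1})(\mathbb A^1_{\mathbb C})$ with $\eta(0)=[\alpha_{n-1}]_{n-1}$ and $\eta(1)=[\beta_{n-1}]_{n-1}$. The first goal is to build a morphism $\Theta_{n-1}\colon X_{n-1}\to\mathcal S(X_n)$ relativising the base case over $X_{n-1}$. By Remark \ref{fold}, $X_n=X_1\times_{\psi_1,\mathbb A^1_{\mathbb C},g}X_{n-1}$ where $g:=\phi_1\circ\cdots\circ\phi_{n-1}$, with $\psi_n$ the projection onto $X_{n-1}$; pulling back the cover $V_1\sqcup V_2\to\mathbb A^1_{\mathbb C}$ along $g$ gives an elementary Nisnevich cover $(V_1\times_{\mathbb A^1_{\mathbb C}}X_{n-1})\sqcup(V_2\times_{\mathbb A^1_{\mathbb C}}X_{n-1})\to X_{n-1}$ (base change preserves open immersions, étale maps and the isomorphism-on-the-complement condition, and Lemma \ref{pushout} applies to the possibly singular $X_{n-1}$). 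Define
\[
h_1^{n-1}\bigl((x_1,y_1),Q\bigr)=\bigl((1,x_1,y_1),Q\bigr),\qquad h_2^{n-1}\bigl(x,Q\bigr)=\bigl((0,x,0),Q\bigr).
\]
These land in $X_n$ because $\psi_1(1,x_1,y_1)=x_1=\pi(x_1,y_1)=g(Q)$ and $\psi_1(0,x,0)=x=g(Q)$, and because the first coordinate is never the deleted point $(0,0,0)$; on the overlap, which lies over $\mathbb A^1_{\mathbb C}\setminus\{0\}$, the two restrictions are joined by $\bigl(t,((x_1,y_1),Q)\bigr)\mapsto\bigl((1-t,x_1,(1-t)y_1),Q\bigr)$, which stays in $X_n$ since $x_1\neq0$ there, so they agree in $\mathcal S(X_n)$ and Lemma \ref{pushout} produces $\Theta_{n-1}$.

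Next I would compute $\Theta_{n-1}$ on $\alpha_{n-1}$ and $\beta_{n-1}$. Since $g(\alpha_{n-1})=\phi_1(\alpha_1)=1\neq0$, the point $\alpha_{n-1}$ factors through $V_2\times_{\mathbb A^1_{\mathbb C}}X_{n-1}$ as $(1,\alpha_{n-1})$, so $\Theta_{n-1}(\alpha_{n-1})$ is the class of $\bigl((0,1,0),\alpha_{n-1}\bigr)$, which under the $n$-fold fibre-product description is the alternating tuple $(\beta_1,\alpha_1,\beta_1,\dots)$, i.e.\ $\beta_n$; thus $\Theta_{n-1}(\alpha_{n-1})=[\beta_n]_1$. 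Since $g(\beta_{n-1})=\phi_1(\beta_1)=0$, the point $\beta_{n-1}$ lifts uniquely (because $V_1$ omits $(0,-\lambda)$) along the étale map to $\bigl((0,\lambda),\beta_{n-1}\bigr)\in V_1\times_{\mathbb A^1_{\mathbb C}}X_{n-1}$, so $\Theta_{n-1}(\beta_{n-1})$ is the class of $\bigl((1,0,\lambda),\beta_{n-1}\bigr)=\alpha_n$, i.e.\ $[\alpha_n]_1$. Applying the functor $\mathcal S^{n-1}$ and using naturality of the canonical map $\mathrm{id}\to\mathcal S^{n-1}$, the morphism $\mathcal S^{n-1}(\Theta_{n-1})\colon\mathcal S^{n-1}(X_{n-1})\to\mathcal S^{n-1}(\mathcal S(X_n))=\mathcal S^n(X_n)$ sends $[\alpha_{n-1}]_{n-1}\mapsto[\beta_n]_n$ and $[\beta_{n-1}]_{n-1}\mapsto[\alpha_n]_n$. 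Composing with $\eta$, the element $\mathcal S^{n-1}(\Theta_{n-1})\circ\eta\in\mathcal S^n(X_n)(\mathbb A^1_{\mathbb C})$ is an $\mathbb A^1$-homotopy with value $[\beta_n]_n$ at $0$ and $[\alpha_n]_n$ at $1$; precomposing with $t\mapsto 1-t$ gives the homotopy claimed in Theorem \ref{homotopy}.

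I expect the main obstacle to be the bookkeeping in the inductive step rather than any isolated hard lemma: one has to check carefully that $\Theta_{n-1}$ is well defined (that the overlap homotopy never leaves $X_n$, and that the pulled-back square is genuinely an elementary distinguished square over the singular scheme $X_{n-1}$), and, more delicately, to track which of the two pieces of the cover each of $\alpha_{n-1},\beta_{n-1}$ lifts through and which $n$-tuple its image is. Here the alternating pattern defining $\alpha_n,\beta_n$ together with the asymmetry $\phi_1(\alpha_1)=1\neq0=\phi_1(\beta_1)$ --- which pairs $\alpha_{n-1}$ with the open part $V_2$ and $\beta_{n-1}$ with the étale part $V_1$, and thereby \emph{swaps} the two base points on passing from level $n-1$ to level $n$ --- is exactly what makes the induction close.
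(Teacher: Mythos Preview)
Your proposal is correct and follows essentially the same approach as the paper: your $\Theta_{n-1}$ is precisely the paper's $\mathcal F_{n-1}\colon X_{n-1}\to\mathcal S(X_n)$, built from the same pulled-back Nisnevich cover $V_1\sqcup V_2$ and the same maps $h_i^{n-1}=h_i\times\mathrm{id}$ with the same overlap homotopy, and your inductive composition $\mathcal S^{n-1}(\Theta_{n-1})\circ\eta$ unwinds to the paper's explicit chain $\mathcal S^{n-1}(\mathcal F_{n-1})\circ\cdots\circ\mathcal S(\mathcal F_1)\circ\mathcal F_0$. The only difference is cosmetic: you phrase the construction as an induction on $n$, whereas the paper builds all the $\mathcal F_n$ at once and then composes them; the bookkeeping about the swap $\alpha_{n-1}\mapsto[\beta_n]_1$, $\beta_{n-1}\mapsto[\alpha_n]_1$ and the identification of which piece of the cover each point lifts through is identical in both.
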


\begin{proof}
For every $n\geq 1$, we will construct an $\mathbb A^1$-homotopy $\mathbb A^1_{\mathbb C}\rightarrow \mathcal S^n(X_n)$ such that $[\alpha_n]_n$ and $[\beta_n]_n$ are contained in its image. We begin by constructing an elementary Nisnevich cover of $X_n$ for all $n\geq 0$. Let $V=V_1\sqcup V_2$, where
\[V_1= E\setminus \{{( \lambda_i,0)}^{3}_{i=1}(0,- \lambda)\} \quad\text{and}\quad V_2=\mathbb A^1_{\mathbb C}\setminus \{0\}.\] 
Define $p_1:=\pi|_{V_1}$ and $p_2$ to be the inclusion $V_2\rightarrow \mathbb A^1_{\mathbb C}$. The morphism $p_1$ is etale and $p_1^{-1}(0)=(0,\lambda)$, thus the map $p_1\sqcup p_2$ forms a Nisnevich cover of $\mathbb A^1_\mathbb C$. 
Now, for $n\geq 1$, consider $X_n$ as schemes over $\mathbb A^1_{\mathbb C}$ through $\Phi_n:=\phi_{1}\circ\dots\phi_n$. We then obtain the following elementary distinguished square,
   
       	\[
			\begin{tikzcd}
				W\times_{\mathbb A^1_{\mathbb C}}X_n\arrow[r,rightarrow,"pr_2"] \arrow[d,swap,rightarrow,"pr_1"] & V_2\times_{\mathbb A^1_{\mathbb C}}X_n\arrow[d,rightarrow, "p_2\times id"] \\
				V_1\times_{\mathbb A^1_{\mathbb C}}X_n \arrow[r,swap,rightarrow, "p_1\times id"] & \mathbb A^1_{\mathbb C}\times_{\mathbb A^1_{\mathbb C}}X_n
			\end{tikzcd}
		\]
where $W=V_1\times_{\mathbb A^1_{\mathbb C}}V_2$.
  
We now construct maps from $X_n$ to $\mathcal S(X_{n+1})$ that send $\alpha_n$ and $\beta_n$ to $[\beta_{n+1}]_1$ and $[\alpha_{n+1}]_1$, respectively.
Since the above square is cocartesian in $Shv(Sm/k)_{Nis}$ by Lemma \ref{pushout}, it suffices to construct morphisms $h_i^n:V_i\times_{\mathbb A^1_{\mathbb C}}X_n \rightarrow X_{n+1}$ for $i=1$ and $2$, such that the following two compositions are identical: 
\[W\times_{\mathbb A^1_{\mathbb C}}X_n\xrightarrow{pr_i\circ h_i^n} X_{n+1}\rightarrow \mathcal S(X_{n+1})\quad\text{for }i=1,2.\] By Remark \ref{fold}, $X_{n+1}=X_1\times_{\psi_1,\mathbb A^1_{\mathbb C},\Phi_n}X_{n}$. Thus, we will define $h_i^n:V_i\times_{\mathbb A^1_{\mathbb C}}X_n\rightarrow X_{n+1}$ as $h_i\times id$, where 

\begin{itemize}
    \item $h_1:V_1\rightarrow X_1\text{ ; } (x_1,y_1)\mapsto (1,x_1,y_1)$,
    \item $h_2:V_2\rightarrow X_1\text{ ; } (x_1)\mapsto (0,x_1,0)$.
\end{itemize}
The maps $h_i^n$ are well defined because $\psi_1\circ h_i=p_i$. 

Now, we will define $\mathcal H^n: \mathbb A^1_{\mathbb C}\times_{\mathbb C}(W\times_{\mathbb A^1_{\mathbb C}}X_n)\rightarrow X_{n+1}$ such that $\mathcal H^n(0)=pr_2\circ h_2^n$ and $\mathcal H^n(1)=pr_1\circ h_1^n$. Similar to $h_i^n$, $\mathcal H^n$ is a product of two morphisms,  $\mathcal H\times id:(\mathbb A^1_{\mathbb C}\times_{\mathbb C}W)\times_{\mathbb A^1_{\mathbb C}}X_n\rightarrow X_{n+1}$, where $\mathcal H$ is the restriction of $\rho_1$:
\[\mathcal H: \mathbb A^1_{\mathbb C}\times_{\mathbb C}W\rightarrow X_1\text \quad   ;   \quad(x_0,x_1,y_1)\mapsto (x_0,x_1,x_0y_1). \]
Clearly, $\mathcal H^n(0)=pr_2\circ h_2^n$ and $\mathcal H^n(1)=pr_1\circ h_1^n$. Therefore, the morphisms $pr_2\circ h_2^n$ and $pr_1\circ h_1^n$ become identical in $\mathcal S(X_{n+1})$. Thus, $h_1^n$ and $h_2^n$ can be glued together to obtain the maps $\mathcal F_n:X_n\rightarrow \mathcal S(X_{n+1})$ for all $n\geq 0$. 

Finally, for $m\geq 1$, define the required $\mathbb A^1$-homotopy in $\mathcal S^m(X_m)$ as the following composition:
\[\mathbb A^1_{\mathbb C}\xrightarrow{\mathcal F_0} \mathcal S(X_1)\xrightarrow{\mathcal S(\mathcal F_1)} \mathcal S^2(X_2)\xrightarrow{\mathcal S^2( \mathcal F_2)} \mathcal S^3(X_3)\rightarrow\dots\rightarrow \mathcal S^m(X_m).\]
To ensure that the above $\mathbb A^1$-homotopy indeed connects $[\alpha]_m$ and $[\beta_m]_m$ in $\mathcal S^m(X_m)$, what remains is to show that:
\begin{enumerate}
    \item $\mathcal F_0(0)=[\alpha_1]_1$ and $\mathcal F_0(1)=[\beta_1]_1$,
    \item For all $n\geq1$, $\mathcal F_n(\beta_n)=[\alpha_{n+1}]_{1}$ and $\mathcal F_n(\alpha_n)=[\beta_{n+1}]_{1}$.
\end{enumerate}
Since $h_1(0,\lambda)=\alpha_1$ and $h_2(1)=\beta_1$, we obtain (1).
For (2), consider the following commutative diagram:
\[
\begin{tikzcd}
 &(V_1\times_{\mathbb A^1_{\mathbb C}}X_n) \sqcup (V_2\times_{\mathbb A^1_{\mathbb C}}X_n)\arrow[r,rightarrow,"h_{1}^n\sqcup h_2^{n}"]\arrow[d,rightarrow,""] & X_{n+1}\arrow[d,rightarrow] \\
 \text{Spec }\mathbb C\sqcup \text{Spec }\mathbb C\arrow[ur,rightarrow,"((0\text{,}\lambda)\text{,}\beta_n)\sqcup(1\text{,}\alpha_n)"]
\arrow[r,rightarrow,swap,"\beta_n\sqcup\alpha_n"] & X_n \arrow[r,swap, rightarrow,"\mathcal F_n"] & \mathcal S(X_{n+1})
\end{tikzcd}
\]
Since $h_1^n((0,\lambda),\beta_n)=(\alpha_1,\beta_n)=\alpha_{n+1}$ and
$h_2^n((1),\alpha_n)=(\beta_1,\alpha_n)=\beta_{n+1}$, we are done.
\end{proof}

\begin{proof}[Proof of Theorem \ref{main}]
By Theorem \ref{neq} and Theorem \ref{homotopy}, we have 
\[ [\alpha_n]_n \neq [\beta_n]_n\quad\text{and}\quad[\alpha_n]_{n+1} =[\beta_n]_{n+1}.\]
Hence, $\mathcal S^n(X_n)(\mathbb C)\neq \mathcal S^{n+1}(X_n)(\mathbb C) $.
\end{proof}

\begin{remark}
For any scheme $X/k$, the field value sections of the sheaf of $\mathbb A^1$-connected components of $X$ can be computed by the following formula \cite{BRSiterations}:  
\[\pi_0^{\mathbb A^1}(X)(F)=\mathcal L(X)(F):= \lim_{\rightarrow n} \mathcal S^n(\mathcal X)(F)\quad\text{for any }F/k.\] 
If $X$ is proper, then $\mathcal S(X)(F)=\mathcal S^2(X)(F)$. However, for non-proper $X$,
the proof of Theorem \ref{main} shows that the infinite iterations of $\mathcal S$ in the above formula are essential. 
\end{remark}

In \cite[Question 2.16]{CRA2}, it is asked whether $\mathcal S(X)(F)=\mathcal S^2(X)(F)$ for non-proper smooth schemes over $k$ when $k=\bar{k}$. This question was already answered in the negative in \cite[Construction 4.5]{BHSconnected}, where  $X_1$ was embedded in a smooth variety $T$ using  \cite[Lemma 4.4]{BHSconnected}, such that $\mathcal S(T)(\mathbb C)\neq \mathcal S^2(T)(\mathbb C)$. We will use a slight generalisation of the same lemma(see Lemma \ref{rigid embedding}) to embed $X_n$ in the smooth varieties $Z_n$ such that $\mathcal S^n(Z_n)(\mathbb C)\neq\mathcal S^{n+1}(Z_n)(\mathbb C)$. This shows that the infinite iterations of $\mathcal S$ are required even for field value points of non-proper smooth schemes over an algebraically closed field.

The following lemma is a reformulation of \cite[Lemma 2.12]{BSruled} and will be used in the construction of $Z_n$.

\begin{lemma}\label{homotopies respect fibers}
    Let $\phi:\mathcal F\rightarrow G$ be a morphism of sheaves of sets on $Sm/k$. Assume that $\mathcal G$ is $\mathbb A^1$-invariant. Then, for any $n$ and any $\mathbb A^1$-homotopy $h:\mathbb A^1\times U\rightarrow \mathcal S^n(\mathcal F)$, there exists $\gamma:U\rightarrow G$ such that the given $\mathbb A^1$-homotopy factors through $\mathcal S^n(\mathcal F\times_{\mathcal G,\gamma}U)\rightarrow \mathcal S^n(\mathcal F)$.
\end{lemma}

The proof of the next lemma runs along the same lines as in \cite[Lemma 4.4]{BHSconnected}.

\begin{lemma}\label{rigid embedding}
    Let $X$ be an affine scheme over a field $k$. Then there exists a closed embedding of $X$ into a smooth scheme $T$ over $k$ such that for any $n$, if $H:\mathbb A^1_k\rightarrow \mathcal S^n(T)$ is an $\mathbb A^1$-homotopy containing $[x]_n$ in its image for some $x\in X(k)$, then $H$ factors through $\mathcal S^n(X)\rightarrow\mathcal S^n(T)$.
\end{lemma}

\begin{proof}
Suppose $X\subset \mathbb A^n_k$ is defined by the ideal $\langle f_1,\dots,f_r\rangle\subset k[x_1,\dots x_n]$. Consider the map $f:\mathbb A^n_k\rightarrow \mathbb A^r_k$ which is given by $(x_1,\dots,x_n)\mapsto(f_1,\dots,f_r)$. Then the fiber of $f$ at $(0,\dots,0)$ is $X$.
Now, choose some etale map $g:C\rightarrow \mathbb A^1_k$ such that $C$ is a smooth curve of positive genus and $0\in\mathbb A^1_k$ has a unique preimage say $c$. Then, $C^r\xrightarrow{g^r} \mathbb A^r_k$ is etale. Define $T:=\mathbb A^n_k\times_{f,\mathbb A^r_k,g^r}V_1^r.$ Then the fiber of the map $T\rightarrow V_1^r$ over $(c,\dots,c)$ is $X$. Clearly, $T$ is a smooth scheme containing $X$. 

We claim that $T$ is the required smooth scheme. Suppose $H$ is an $\mathbb A^1$-homotopy in $\mathcal S^n(T)$ whose image contains $[x]_n$ for some  $x\in X(k)$. Since $C^r$ is $\mathbb A^1$-invariant, by Lemma \ref{homotopies respect fibers}, there exists $Q:\text{Spec }k\rightarrow C^r$ such that the  $\mathbb A^1$-homotopy $H$ factors through $\mathcal S^n(T\times_{C^r,Q}\text{Spec }k)\rightarrow \mathcal S^n(T)$. Since $[x]_n$ is contained in the image of $H$ and $x\in X(k)$, the point $(c,\dots,c)$ belongs to the image of the composition $\mathbb A^1_k\xrightarrow{H}\mathcal S^n(T)\rightarrow C^r$. Hence, $Q=(c,\dots,c)$, and $H$ factors through the map $\mathcal S^n(X)\rightarrow\mathcal S^n(T)$.
\end{proof}

\begin{proposition}
For every $n\in N$, there exists a smooth variety $Z_n$ over $\mathbb C$, such that $\mathcal S^n(Z_n)(\mathbb C)\neq\mathcal S^{n+1}(Z_n)(\mathbb C)$.
\end{proposition}

\begin{proof}
For every $n$ and $Y_n$(see Construction \ref{construct Y_n}), let $T_n$ be the smooth variety corresponding to $Y_n$ arising from Lemma \ref{rigid embedding}. Then there exists a morphism $\gamma_n$ from $T_n$ to an $\mathbb A^1$-rigid scheme $V_n$ and a point $P_n\in V_n(\mathbb C)$ such that fiber of the morphism $\gamma_n$ over $P_n$ is $Y_n$. Choose a suitable open subscheme $Z_n$ of $T_n$ such that $\gamma_n|_{Z_n}^{-1}(P_n)=X_n$. Since any $\mathbb A^1_k\rightarrow \mathcal S^n(Z_n)$ whose image contains $[\alpha_n]_n$ factors through the map $\mathcal S^n(X_n)\rightarrow \mathcal S^n(Z_n)$, we have $[\alpha_n]_n\neq[\beta_n]_n$ in $\mathcal S^n(Z_n)(\mathbb C)$, while  $[\alpha_n]_{n+1}=[\beta_n]_{n+1}$ in $\mathcal S^{n+1}(Z_n)(\mathbb C)$.
\end{proof}

\subsection{$\mathbb A^1$-connectedness of $X_n$}

In this subsection, we show that the sequence of sheaves $(\mathcal S^m(X_{n}))_{m\geq 1}$ stabilises at the $n+2$ stage, and that 
\[
\pi_0^{\mathbb A^1}(X_n)=\mathcal S^{n+2}(X_n)=*.
\] 

\begin{theorem}
Let $k$ be any finitely generated field extension of $\mathbb C$ and let $n\geq 1$. Then  $\mathcal S^{n+1}(X_{n})(k)=*$ .   
\end{theorem}
   
\begin{proof}
   
It suffices to show that $\mathcal S^{n+1}(X_n\times_{\mathbb C}\text{Spec } k)(k)=*$. The maps $\mathcal F_n$ constructed in Theorem \ref{homotopy} will be used to prove the theorem. We will abuse the notation and write $X_n$ for the schemes $X_n\times_{\mathbb C}\text{Spec } k$, and $\mathcal F_n$ for the maps $\mathcal F_n\times_{\mathbb C}\text{Spec } k: X_n\times_{\mathbb C}\text{Spec } k\rightarrow \mathcal S(X_{n+1}\times_{\mathbb C}\text{Spec } k)$, applying same conventions to all the maps involved in construction of $\mathcal F_n$. We prove that $\mathcal S^{n+1}(X_n)(k)=*$ by induction on $n$. 

\begin{case}[$n=1$] We claim that $[Q]_2=[\beta_1]_2$ for any $Q\in X_1(k)$.
If $\mathcal F_0(k):\mathbb A^1_k(k)\rightarrow \mathcal S(X_1)(k)$ is surjective, there would be nothing to prove. However, since this is not the case, we will first list  all $k$-valued points of $X_1$, that do not admit a lift to $\mathbb A^1_k(k)$ via $\mathcal F_0(k)$. We will then slightly modify $\mathcal F_0$ to $\mathcal F_0^a$ or $\tilde{\mathcal F_0^a}$ such that their images contain $[\beta_1]_1$, and union of their images contain all $k$-valued points of $\mathcal S(X_1)$.

Recall that the map $\mathcal F_0$ includes the following morphisms:
\begin{itemize}
    \item $h_i:V_i\rightarrow X_1$, for $i=1,2,$ and
    \item $\mathcal H:\mathbb A^1_k\times W\rightarrow X_1$, where $\mathcal H$ is the restriction of $\rho_1$.
\end{itemize}
If $Q\in X_1(k)$ is contained in the image of any of these maps, then $[Q]_1$ is in the image of $\mathcal F_0$, and we have nothing to prove.

Now, the map  $\rho_1:\mathbb A^1_k\times E\rightarrow Y_1$ is an isomorphism restricted to $\mathbb A^1_k\setminus\{0\}\times E$, while the image of $\rho_1(0)\setminus\{(0,0,0)\}$ coincides with $h_2(V_2)$. Since $V_1=E\setminus\{(\lambda_1,0)^3_{i=1}, (0,-\lambda)\}$, $h_1=\rho(1)|_{V_1}$, and $W=V_1\setminus\{(0,\lambda)\}$, the remaining $k$-valued points of $X_1$ must belong to one of the following sets:
\begin{itemize}
        \item  $\rho_1(\mathbb A^1_k\setminus\{0,1\}\times \{(0,\lambda)\})$,
        \item  $\rho_1(\mathbb A^1_k\setminus\{0\}\times \{(0,- \lambda)\})$, or
        \item $\rho_1(\mathbb A^1_k\setminus\{0\}\times \{{( \lambda_i,0)}^{3}_{i=1}\})$.
\end{itemize}
Let $Q$ belong to any of the sets above. Then, the case $n = 1$ will be completed by proving $[Q]_2=[\beta_1]_2$. Let $a\in k^*$.

 If $Q=\rho_1((a,0,\lambda))$, define the map $h_1^a:V_1\rightarrow X_1$ to be $\rho_1(a)|_{V_1}$. Then, $h^a_1(a)(0,\lambda)=Q$. Now, replace $h_1$ by $h^1_a$ in the construction of $\mathcal F_0$, while keeping all the other maps the same. Since $\mathcal H(a)=h^a_1|_W$ and $\mathcal H(0)=h_2|_W$, the maps $h_1^a$ and $h_2$ can be glued together to produce the map $\mathcal F_0^a:\mathbb A^1_k\rightarrow S(X_1)$. Since $h_2(1)=\beta_1$, $\mathcal F_0^a$ contains both  $[Q]_1$ and $[\beta_1]_1$ in its image. Hence, we have $[Q]_2=[\beta_1]_2$ as desired. 

If $Q=\rho_1((a,0,-\lambda))$, define $\tilde{V_1}=E\setminus\{(\lambda_1,0)^3_{i=1}, (0,\lambda)\}$ and let $\tilde{h}_1^a=\rho(a)|_{\tilde{V_1}}$. Then $\tilde{h}^a_1(a)(0,-\lambda)=Q$. Use $\tilde{V_1}\sqcup V_2$
as the Nisnevich cover instead of $V_1\sqcup V_2$. Thus, $\tilde{V_1}\times_{\mathbb A^1_k}V_2=W$, $\mathcal H(a)=\tilde{h}^a_1|_W$ and $\mathcal H(0)=h_2|_W$.
Hence, similar to $\mathcal F_0^a$, we obtain $\tilde{\mathcal F_0^a}$ by gluing $\tilde{h}_1^a$ and $h_2$, which will contain both $[Q]_1$ and $[\beta_1]_1$ in its image.

If $Q=\rho_1((a,\lambda_i,0))$, then $[Q]_1=[(0,\lambda_i,0])_1$ via the $\mathbb A^1_k$ corresponding to $\rho_1|_{\mathbb A^1_k\times\{(\lambda_i,0)\}}$. Since $(0,\lambda_i,0)\in h_2(V_2)$, we have $[(0,\lambda_i,0)]_2=[\beta_1]_2$ via $\mathcal F_0$, hence the claim is proved.
\end{case}

\begin{case}[$n>1$] Assuming the statement of theorem for $n$, we will prove it for $n+1$. It is sufficient to prove the following claim.\\

\noindent\textbf{Claim:} $[Q]_{n+2}=[\beta_{n+1}]_{n+2} $ for any $Q\in X_{n+1}(k)$.\\ 

\noindent Since $X_{n+1}=X_1\times_{\mathbb A^1_k}X_{n}$, we can write $Q$ as $(Q_1,Q_2)$, for some $Q_1\in X_1(k)$ and $Q_2\in X_n(k)$. Using notations from the case $n=1$, we see that 
\[
Q_1 \in \rho\left(\mathbb{A}^1_k \times \left\{(\lambda_i, 0)\right\}_{i=1}^3 \right) \cup \left(\bigcup_{a \in k^*} \left(h_1^a(V_1) \cup h_1^a(\tilde{V_1})\right)\right)\cup h_2(V_2).
\]
 If $[Q_1]_1\in h_1^a(V_1)$, we can modify $\mathcal F_{n}$ to $\mathcal F_{n}':X_{n} \rightarrow \mathcal S(X_{n+1}) $ such that $[Q]_1$ and $[\beta_{n+1}]_1$ are in the image of $\mathcal F_{n}'$. The map $\mathcal F_{n}$ was constructed by gluing $h_i\times id:V_i\times_{\mathbb A^1_k}X_n\rightarrow X_1\times_{\mathbb A^1_k}X_n$ for $i=1,2$. Replacing $h_1\times id$ with $h^a_1\times id$ in the construction of $\mathcal F_n$, $h^a_1\times id$ and $h_2\times id$ can be glued to give the required $\mathcal F_n'$. Then $[\beta]_1$ and $[Q]_1$ can be lifted to $X_{n}(k)$ via $\mathcal F'_n$ and since $\mathcal S^{n+1}(X_{n})(k)=*$ by the induction hypothesis, we obtain $[\beta_{n+1}]_{n+2}=[Q]_{n+2}$.
 
 For $[Q_1]_1\in \tilde h_1^a(V_1)$, replace the Nisnevich cover $V_1\sqcup V_2$ with $\tilde V_1\sqcup V_2$ and replace $h_1\times id$ with $\tilde h^a_1\times id$ in the construction of $\mathcal F_n$ and the rest of the argument proceeds similarly as for the case $[Q_1]_1\in h_1^a(V_1)$. If $[Q_1]_1\in h_2^a(V_2)$, then $[Q]_1$ is contained in the image of $\mathcal F_n$ and we are done.
 
 Finally suppose  $Q_1$ is contained in the image of $\rho_1|_{\mathbb A^1_k\times \{( \lambda_i,0)\}}$ for some $i\in\{1,2,3\}$. Then $(Q_1,Q_2)$ and $((0,\lambda_i,0),Q_2)$ are in the image of the map \[\mathbb A^1_k\xrightarrow{(\rho_1|_{\mathbb A^1_k\times \{( \lambda_i,0)\}}, Q_2)}X_1\times_{\mathbb A^1_k}X_{n}.\] Since $((0,\lambda_i,0),Q_2)$ is in the image of $h_2\times id$, we obtain \[[((0,\lambda_i,0),Q_2)]_{n+2}=[\beta_{n+1}]_{n+2},\] which further implies that $[\beta_{n+1}]_{n+2}=[Q]_{n+2}$.
\end{case}\end{proof}

\begin{theorem}\label{A1-connectedness}
    $\pi_0^{\mathbb A^1}(X_n)=\mathcal S^{n+2}(X_n)=*$.
\end{theorem}

\begin{proof}
Since $\mathcal S^{n+1}(X_n)(k)=*$ for any finitely generated and separable extension $k$ of $\mathbb C$, we have $\mathcal S^{n+2}(X_n)=*$ from Theorem \ref{S^2}.
Combining Theorem \ref{formula} with Lemma \ref{stable}, we conclude that $\pi_0^{\mathbb A^1}(X_n)=*$. 
\end{proof}

\end{document}